\newtheorem{thm}{Theorem}
\newtheorem{lem}[thm]{Lemma}
\newtheorem{con}[thm]{Conjecture}  
\theoremstyle{definition}
\newtheorem{prob}[thm]{Problem}
\DeclareMathOperator{\diam}{diam} 
\DeclareMathOperator{\dbar}{\overrightarrow{d}}
\DeclareMathOperator{\Gbar}{\overrightarrow{G}} 
\title{A Size Condition for Small Diameter Orientable Graphs}
\author{Sopon Boriboon\thanks{\,Thammasat Secondary School, Faculty of Learning Sciences and Education, Thammasat University, Pathum Thani 12121, Thailand; \texttt{sopon.bo@lsed.tu.ac.th}.}
  \and Teeradej Kittipassorn\thanks{\,Department of Mathematics and Computer Science, Faculty of Science, Chulalongkorn University, Bangkok 10330, Thailand; \texttt{teeradej.k@chula.ac.th}.}}
\begin{document}
	
	\maketitle
	
\begin{abstract}
	
In 2002, Koh and Tay conjectured that every bridgeless graph of order $n\geq 5$ and size at least ${n\choose 2}-n+5$ has an orientation of diameter two. Later, Cochran,  Czabarka, Dankelmann  and Sz\'{e}kely proved this conjecture and asked what is the minimum number of edges required in a bridgeless graph of order $n$ to guarantee the existence of an orientation of diameter at most $d$?  We conjecture that the answer is ${n-d \choose 2}+n+2$. We prove this conjecture for the case $d=n-2$ and prove the lower bound of this conjecture for the case $5\leq d\leq n-2$.
	
\end{abstract}
	
\section{Introduction}\label{section-introduction}
\label{thm:robbin}

An edge $e$ of a connected graph $G$ is  a {\it bridge} if $G-e$ is disconnected.  A graph $G$ is \emph{bridgeless} if it is connected and contains no bridge. An \emph{orientation} $O_G$ of a graph $G$ is a digraph $\Gbar$ obtained from $G$ by assigning a direction $\overrightarrow{uv}$ or $\overrightarrow{vu}$ for each edge $uv$ of $G$.  For vertices $u$ and $v$ in a digraph $\Gbar$, the \emph{distance} from $u$ to $v$ in $\Gbar$, denoted by $\dbar_{\Gbar}(u,v)$, is the minimum length of a $u,v$-path on $\Gbar$ and $\dbar_{\Gbar}(u,v)=\infty$ if $\Gbar$ has no such path. The \emph{diameter} of a digraph $\Gbar$, denoted by $\diam(\Gbar)$, is $\max_{u,v\in V(\Gbar)}\dbar_{\Gbar}(u,v)$.

Applications of optimal orientations that minimize the diameter can be found in various areas, such as the design of efficient one-way street traffic systems. The two-way street traffic system can be effectively represented by a graph where vertices denote the street intersections, and edges denote the direct routes between intersections without traversing any intermediate intersections.   Robbin~\cite{Robbins1939Questions} showed that the two-way street traffic system can be converted to a one-way traffic system if and only if it is possible to travel in the two-way street traffic system from any intersection to another even when any street is removed by providing a necessary and sufficient condition for the existence of an orientation of finite diameter of the graph.

\begin{thm}\label{thm:robbin}\cite{Robbins1939Questions}
	A connected graph has an orientation of finite diameter if and only if it is bridgeless. 
\end{thm}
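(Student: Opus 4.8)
The plan is to split the biconditional into its two implications, first reducing the notion of finite diameter to strong connectivity. Since $G$ is finite, an orientation $\Gbar$ has finite diameter if and only if $\Gbar$ is strongly connected: if every pair of vertices is mutually reachable, then the maximum of the finitely many pairwise distances is finite, and conversely a finite diameter forces $\dbar_{\Gbar}(u,v)<\infty$ for all $u,v$. So it suffices to prove that a connected graph admits a strongly connected orientation if and only if it is bridgeless.

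For the forward (necessity) direction I would argue by contraposition. Suppose $G$ has a bridge $e=uv$, so $G-e$ splits into a component containing $u$ and one containing $v$, and every $u,v$-walk in $G$ uses $e$. In any orientation $e$ receives a single direction, say $\overrightarrow{uv}$; then no directed walk can go from $v$ to $u$, since any such walk would have to traverse $e$ against its orientation. Hence $\dbar_{\Gbar}(v,u)=\infty$, and no orientation of $G$ has finite diameter.

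The substantive direction is sufficiency: a connected bridgeless (equivalently, $2$-edge-connected) graph has a strongly connected orientation. The plan is to use an ear decomposition, writing $G=C_0\cup P_1\cup\cdots\cup P_k$, where $C_0$ is a cycle and each $P_i$ is an ear --- either an \emph{open ear} (a path whose endpoints are distinct vertices of $G_{i-1}:=C_0\cup\cdots\cup P_{i-1}$ and whose internal vertices are new) or a \emph{closed ear} (a cycle meeting $G_{i-1}$ in exactly one vertex). I would then orient $G$ by induction on $i$, maintaining the invariant that $G_i$ is strongly connected. For the base case, orient $C_0$ cyclically. For an open ear from $a$ to $b$, orient it as a directed path $a\to\cdots\to b$: any new internal vertex is reachable from $a$ and reaches $b$ along the ear, and since the already strongly connected $G_{i-1}$ contains $a$ and $b$, strong connectivity of $G_i$ follows. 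For a closed ear through the shared vertex $x$, orient it as a directed cycle; each new vertex both reaches and is reached from $x$, again preserving the invariant.

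The main obstacle is the structural input rather than the orientation argument, which is routine once the decomposition is in hand. Concretely, one must justify that every connected bridgeless graph indeed admits such an ear decomposition; I would either cite the standard ear-decomposition theorem or, to keep the argument self-contained, replace it with a depth-first-search proof: root a DFS tree at any vertex $r$, orient tree edges away from $r$ and all (necessarily back-) edges toward $r$. Then $r$ reaches every vertex down the tree, and bridgelessness guarantees that the tree edge above each vertex $v$ is covered by a back edge from the subtree at $v$ to a proper ancestor of $v$; this yields, by induction on depth, a directed path from $v$ back to $r$, and strong connectivity follows.
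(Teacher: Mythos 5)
Your proof is correct, but note that the paper does not prove this statement at all: it is Robbins' 1939 theorem, quoted with a citation and used as a black box (in the proof of Lemma~\ref{lem:non-Hamiltonian}, only the existence of a strong orientation is invoked). So there is no in-paper argument to compare against. What you have written is a sound, standard proof: the reduction of \emph{finite diameter} to \emph{strongly connected} is immediate for finite graphs; the necessity direction via the two sides of a bridge is airtight; and the sufficiency direction is one of the two classical arguments (ear decomposition of a $2$-edge-connected graph, with closed ears permitted, oriented ear by ear while maintaining strong connectivity), with the DFS low-point argument correctly offered as a self-contained substitute for the ear-decomposition theorem. The only point worth tightening is the degenerate case of a single vertex, where there is no initial cycle to start the decomposition; for $n\geq 2$ a connected bridgeless graph has minimum degree at least $2$ and hence contains a cycle, so the decomposition exists.
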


There are many researches regarding sufficient conditions for the existence of an orientation of diameter of graphs, e.g., diameter conditions~\cite{Egawa2007Orientation,kwok2010oriented}, minimum degree conditions~\cite{Bau2015Diameter,Chen2021Diameter,Czabarka2019Degree,Surmacs2017Improved}, maximum degree conditions~\cite{Chen2023Oriented,Dankelmann2018Oriented}, degree conditions~\cite{Czabarka2019Degree}, girth conditions~\cite{Chen2023Oriented,Cochran2024Large}, block conditions~\cite{Dankelmann2025Diameter}, and size conditions~\cite{Cochran2021Size}.

For given values $n$ and $d$, let $m(n,d)$ be the minimum number $m$ such that every bridgeless graph of order $n$ and size at least $m$ has an orientation of diameter at most $d$. We observe that $m(n,1)$ is clearly infinity and for the cases $d\geq n-1$, $m(n,d)$ is negative infinity by Theorem~\ref{thm:robbin} since the diameter of bridgeless graphs is always at most $n-1$. Therefore, we consider $m(n,d)$ where $2\leq d\leq n-2$. In 2002, Koh and Tay~\cite{Koh2002Optimal} observed that the graph of order $n$ and size ${n\choose 2}-n+4$ obtained from $K_{n-1}$ by adding a new vertex $v$ joining to three vertices of $K_{n-1}$ has no orientation of diameter two, therefore $m(n,2)\geq{n\choose 2}-n+5$. They conjectured that every bridgeless graph of order $n\geq5$ and size at least ${n\choose 2}-n+5$ has an orientation of diameter two. In 2021, Cochran,  Czabarka, Dankelmann and Sz\'{e}kely~\cite{Cochran2021Size} proved this conjecture, implying that $m(n,2)={n\choose 2}-n+5$. Moreover, they asked for the exact value of $m(n,d)$ for more general value of $d$.

In this paper, we show that the minimum number of edges required in any bridgeless graph of order $n$ to guarantee the existence of an orientation of diameter at most $n-2$ is $n+3$. 

\begin{thm}\label{thm:n-2}
	For  $n\geq 5$,  we have $m(n,n-2) \leq n+3$.	
\end{thm}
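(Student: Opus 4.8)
The plan is to prove the equivalent statement that every bridgeless graph $G$ of order $n\ge 5$ with $|E(G)|\ge n+3$ admits an orientation of diameter at most $n-2$. The guiding observation is that in any orientation an ordered pair can have directed distance exactly $n-1$ only if a shortest directed path between them runs through all $n$ vertices, i.e.\ only if $G$ has a Hamiltonian path. Consequently, if $G$ is not traceable then, by Theorem~\ref{thm:robbin}, $G$ has a strong orientation, and every strong orientation of a non-traceable graph has all of its (finite) directed distances at most $n-2$; this settles the non-traceable case at once. Hence I may assume that $G$ is traceable and fix a Hamiltonian path $P=v_1v_2\cdots v_n$. Because $G$ is $2$-edge-connected and $|E(G)|\ge n+3$, there are at least four edges off $P$, which I call \emph{chords}, and every edge $v_iv_{i+1}$ of $P$ is spanned by a chord, since otherwise it would be a bridge.

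I will first treat the subcase in which $G$ is Hamiltonian, say $v_1v_n\in E(G)$, as it is both clean and instructive. Orient the Hamiltonian cycle cyclically, $v_1\to v_2\to\cdots\to v_n\to v_1$; this orientation is already strongly connected, with directed distance $(j-i)\bmod n$ from $v_i$ to $v_j$, so the only pairs at distance $n-1$ are the $n$ \emph{wrap pairs} $(v_i,v_{i-1})$. Orienting any remaining edge only adds arcs and hence can never increase a distance, so it suffices to choose orientations of three chords so that every wrap pair is shortened. A short computation shows that a single oriented chord shortens every wrap pair except those in one arc of the cyclic list of wrap pairs, and that the two orientations of a chord fail complementary arcs; thus each chord contributes two cut points to this cyclic list, and the \enquote{which side fails} indicator of a point is constant on each arc between consecutive cuts. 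A wrap pair $p$ is therefore left unshortened by exactly one of the $2^3=8$ orientation patterns, namely the one selecting for every chord the failing side containing $p$. Since three chords give at most six cut points, hence at most six arcs, this indicator realises at most six patterns, so at most six of the eight patterns are bad; as $8>6$, a good pattern exists and shortens every wrap pair simultaneously. With that assignment every wrap pair drops to distance at most $n-2$, so the diameter of the resulting orientation is at most $n-2$.

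The main obstacle is the remaining subcase, where $G$ is traceable but not Hamiltonian, so $v_1v_n\notin E(G)$. Here orienting $P$ forward is not even strongly connected, and the at least four chords must simultaneously supply all return routes (for strong connectivity) and enough shortcuts (to keep every directed distance below $n-1$). My plan is to use chords incident with the endpoints $v_1$ and $v_n$ to close $P$ into a long directed cycle covering all but a few vertices, to absorb the exceptional vertices along the path edges, and then to run a covering argument in the spirit of the Hamiltonian subcase on the remaining chords; the count of four chords is exactly what the two endpoints force here. I expect the difficulty to lie in the coupling between connectivity and the diameter bound: unlike the cyclic case, strong connectivity is no longer automatic, so the available chord orientations are constrained and the clean counting argument must be replaced by a case analysis according to the positions of the chords nearest $v_1$ and $v_n$, together with whether some chord spans almost all of $P$. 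Finally I would verify the finitely many small or degenerate configurations for $n$ close to $5$ directly, thereby completing the bound $m(n,n-2)\le n+3$ of Theorem~\ref{thm:n-2}.
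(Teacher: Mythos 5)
Your reduction to the traceable case is sound (a directed distance of $n-1$ forces a Hamiltonian underlying path, so any strong orientation of a non-traceable graph already has diameter at most $n-2$; this matches the opening move of the paper's Lemma~\ref{lem:non-Hamiltonian}). Your Hamiltonian subcase is also correct, and in fact takes a genuinely different route from the paper's Lemma~\ref{lem:Hamiltonian}: where the paper splits into ``two parallel chords'' versus ``three pairwise crossing chords'' and exhibits explicit short cycles, you observe that each of the three chords fails exactly one arc of the cyclic list of wrap pairs, that the failure-pattern map is constant on the at most six arcs cut out by the chord endpoints, and conclude by $8>6$ that some orientation pattern shortens every wrap pair. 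That pigeonhole argument is a clean and valid replacement for the paper's case analysis.

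The genuine gap is the traceable-but-not-Hamiltonian case, which you describe only as a plan (``close $P$ into a long directed cycle\ldots absorb the exceptional vertices\ldots run a covering argument'') and for which you explicitly flag an unresolved difficulty, namely that strong connectivity and the distance bound must be achieved simultaneously by the same four chords. This is precisely where the paper does its real work: Lemma~\ref{lem:non-Hamiltonian} starts from a strong orientation, extracts a directed Hamiltonian path $\overrightarrow{P}=w_1\cdots w_n$, chooses return edges $\overrightarrow{e_1}=\overrightarrow{y_1x_1}$ and $\overrightarrow{e_2}=\overrightarrow{x_2y_2}$ with $\dbar_{\overrightarrow{P}}(x_1,y_1)$ and $\dbar_{\overrightarrow{P}}(y_2,x_2)$ maximal, and then splits into the overlapping case (Case~1, where a second Hamiltonian-like path $\overrightarrow{Q}$ appears and two further chords kill the two exceptional pairs) and the disjoint case (Case~2, requiring an auxiliary connecting path $\overrightarrow{R}$ and three further subcases depending on whether $a_2=y_2$ and $b_2=c_2$). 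None of this is routine, and your sketch does not yet show that the four available chords always suffice, nor how the exceptional pairs created by re-routing are handled. Until that case is carried out in full, the bound $m(n,n-2)\le n+3$ is not established.
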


On the other hand, we demonstrate a graph of order $n$ and size ${n-d\choose 2}+n+1$ without an orientation of diameter at most $d$ for any $5\leq d \leq n-2$.

\begin{thm}\label{thm:sharp}
	
	\begin{enumerate}[(i)]
		\item For  $5\leq d\leq n-2$,  we have $m(n,d) \geq {n-d\choose 2}+n+2$.	
		\item For  $n\geq 5$,  we have $m(n,n-2) \geq n+3$.	
	\end{enumerate}
\end{thm}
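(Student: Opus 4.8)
The plan is to prove both lower bounds by exhibiting explicit bridgeless graphs of the stated sizes that admit no orientation of diameter at most $d$. Since $m(n,d)$ is a threshold, a single bridgeless graph of order $n$ and size ${n-d\choose2}+n+1$ with no orientation of diameter $\le d$ already forces $m(n,d)\ge{n-d\choose2}+n+2$ (the defining implication fails at $m={n-d\choose2}+n+1$), and analogously a graph of size $n+2$ with no orientation of diameter $\le n-2$ gives part (ii). I would also note that for $n\ge7$ part (ii) is the special case $d=n-2$ of part (i), since there ${n-(n-2)\choose2}+n+2=n+3$; the separate statement is needed only to cover the small orders $n=5,6$, where $n-2<5$ and part (i) does not apply.

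For part (i) I would build the extremal graph $G$ from a clique $A$ together with a sparse gadget on the remaining $\approx d$ vertices. The intended mechanism is a forced detour: arrange most of the leftover vertices as an \emph{induced} path whose interior vertices have degree exactly two and whose two ends are joined into $A$, and add just enough further edges incident to $A$ to reach size exactly ${n-d\choose2}+n+1$. First I would check the size and that $G$ is bridgeless, which is immediate once every edge is seen to lie on a cycle: clique edges trivially, and each path edge lies on the cycle formed by the attached path together with a route through $A$ between its two endpoints.

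The core of the argument is to show that no orientation $\Gbar$ of $G$ has $\diam(\Gbar)\le d$. Assuming one exists, $\Gbar$ is strongly connected, so every vertex has positive in- and out-degree; each interior path vertex, having degree two, then has exactly one in-edge and one out-edge, and this constraint propagates to orient the whole attached path consistently as a directed path from one end to the other. I would then estimate the backward distance along this forced path: to travel from an early path vertex back to its predecessor one is compelled to traverse the \emph{entire} directed path and then return through $A$, a trip whose length equals the path length plus the directed return distance inside $A$. Choosing the path length and the attachment so that this sum is forced to exceed $d$ produces a pair of vertices at distance $\ge d+1$, the desired contradiction.

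The step I expect to be the main obstacle is controlling the return distance through $A$ while keeping the edge count at its maximal value ${n-d\choose2}+n+1$. The extra edges needed to reach this size threaten to create shortcuts that an adversarial orientation can use to pull the backward distance back down to exactly $d$; indeed the naive attachment of one path to a \emph{full} clique genuinely does admit an orientation of diameter $d$ (one can check this directly on, e.g., $n=10,d=5$), so that graph is \emph{not} a valid witness. The attachment must therefore be chosen so that, via a counting/covering argument in the spirit of Koh--Tay, the clique provably cannot be oriented to meet every distance demand the gadget imposes at once — this forces some vertex to behave like a local source (or sink) that is then unreachable within $d$ steps, and this is where the hypothesis $d\ge5$ enters. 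Finally, for part (ii) with $d=n-2$ (in particular the small cases $n=5,6$), I would give a separate direct construction: a cyclically structured graph on $n$ vertices with exactly $n+2$ edges, and verify by hand that every strong orientation has diameter exactly $n-1>n-2$, so that no orientation of diameter at most $n-2$ exists.
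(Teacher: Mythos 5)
There is a genuine gap: your write-up correctly identifies the overall strategy (exhibit a bridgeless witness of size ${n-d\choose 2}+n+1$ with no orientation of diameter at most $d$, note that part (ii) is the case $d=n-2$ of part (i) for $n\geq 7$ and needs a separate small construction for $n=5,6$), and you correctly observe that the naive witness --- a single induced path attached at both ends to a clique --- fails because it does admit an orientation of diameter $d$. But you never actually produce a valid witness. You defer the essential content to ``a counting/covering argument in the spirit of Koh--Tay'' showing the clique cannot meet all distance demands simultaneously, and that argument is neither carried out nor, as far as I can see, the right mechanism: in the actual extremal graph the clique is irrelevant to the obstruction.

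The paper's witness $G_{n,d}$ is your path-plus-clique graph augmented by one purely \emph{local} gadget: a new vertex $u'_3$ adjacent to $u_2$, $u_3$ and $u_4$ on the path $u_1u_2\ldots u_d$ (whose endpoints $u_1,u_d$ are joined to all of $K_{n-d-1}$). This adds exactly the $3$ edges needed to reach size ${n-d\choose 2}+n+1$. The degree-two vertices $u_4,\ldots,u_{d-1}$ force the tail of the path to be a directed path, and the hypothesis $d\geq 5$ enters not through any clique counting but because the forced $u_5,u_4$-path of length at most $d$ must then traverse $u_5,\ldots,u_d$, the clique, $u_1,u_2$, exactly one of $\{u_3,u'_3\}$, and $u_4$, which pins down the orientations of $u_1u_2$, $u_2u_3$, $u_3u_4$; a two-case analysis on the orientation of the remaining edge $u_3u'_3$ then produces a pair of vertices (either $(u'_3,u_3)$ or $(u_5,u'_3)$) at distance at least $d+1$, no matter how the clique is oriented. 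The same gadget on a cycle $u_1\ldots u_{n-1}$ gives the $(n+2)$-edge witness for part (ii) when $n=5,6$. Without specifying such a gadget and verifying the forced-distance contradiction, your proposal is a plan rather than a proof; the step you flag as ``the main obstacle'' is precisely the step that remains open.
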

 
By Theorems~\ref{thm:n-2} and~\ref{thm:sharp}, for $n\geq 5$, we have  

\begin{center}
	${\displaystyle m(n,n-2) = n+3 = {n-(n-2)\choose 2}+n+2}$.
\end{center}

Recall that Cochran,  Czabarka, Dankelmann and Sz\'{e}kely~\cite{Cochran2021Size} showed that

\begin{center}
	${\displaystyle m(n,2) = {n\choose 2}-n+5 =  {n-2 \choose 2}+n+2}$.
\end{center}

These motivate us to conjecture the following form for $m(n,d)$.	

\begin{con}\label{con:m(n,d)}
	For  $2\leq d\leq n-2$,  we have $m(n,d)={n-d\choose 2}+n+2$.	
\end{con}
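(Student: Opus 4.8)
The conjecture asserts an equality, so I would prove the matching lower and upper bounds separately. The lower bound $m(n,d)\ge\binom{n-d}{2}+n+2$ is already supplied by Theorem~\ref{thm:sharp}(i) for $5\le d\le n-2$ and by Koh--Tay and Cochran et al.\ for $d=2$; to complete it, it remains to produce extremal bridgeless graphs of order $n$ and size $\binom{n-d}{2}+n+1$ with no orientation of diameter at most $d$ for the two small values $d\in\{3,4\}$, which I expect the construction behind Theorem~\ref{thm:sharp} to yield after a minor adjustment of how its sparse part is attached. The real content is therefore the upper bound $m(n,d)\le\binom{n-d}{2}+n+2$ in the intermediate range $3\le d\le n-3$; the two boundary cases $d=n-2$ and $d=2$ are exactly Theorem~\ref{thm:n-2} and the theorem of Cochran et al.

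My main tool for the upper bound would be an induction that reduces the pair $(n,d)$ to $(n-1,d-1)$ by suppressing a degree-two vertex. Let $G$ be bridgeless of order $n$ and size at least $\binom{n-d}{2}+n+2$, and suppose first that $G$ has a vertex $v$ of degree two whose neighbours $a,b$ are non-adjacent. Form $G'$ by deleting $v$ and adding the edge $ab$. Suppressing a degree-two vertex preserves bridgelessness, $G'$ has order $n-1$, and since $\binom{n-d}{2}-\binom{n-1-d}{2}=n-1-d$ one checks that $|E(G')|=|E(G)|-1\ge\binom{(n-1)-(d-1)}{2}+(n-1)+2$, which is precisely the threshold for $(n-1,d-1)$. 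By the induction hypothesis $G'$ has an orientation $O'$ of diameter at most $d-1$; say it orients the new edge as $\overrightarrow{ab}$. Reinsert $v$ by replacing $\overrightarrow{ab}$ with the directed path $\overrightarrow{av},\overrightarrow{vb}$. A shortest path starting at $b$ never uses the arc $\overrightarrow{ab}$ and a shortest path ending at $a$ never uses it either, so those distances are unchanged; consequently $\dbar(v,y)\le 1+\dbar_{O'}(b,y)\le d$, $\dbar(x,v)\le\dbar_{O'}(x,a)+1\le d$, and for all remaining pairs $\dbar(x,y)\le\dbar_{O'}(x,y)+1\le d$, the last inequality because a shortest $x,y$-path uses $\overrightarrow{ab}$ at most once. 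Hence the resulting orientation of $G$ has diameter at most $d$. The recursion moves along the diagonal until it reaches the base case $d=2$ (Cochran et al., valid since the order $n-d+2\ge 5$ when $d\le n-3$), while the diagonal $d=n-2$ is covered at every order by Theorem~\ref{thm:n-2}.

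The obvious gap---and what I expect to be the main obstacle---is that a suppressible degree-two vertex need not exist. Two configurations must be handled separately. If a degree-two vertex $v$ has adjacent neighbours $a\sim b$, then deleting $v$ removes two edges without creating one, so $G-v$ falls one edge short of the $(n-1,d-1)$ threshold and the naive reinsertion only guarantees diameter $d+1$; the triangle on $\{a,b,v\}$ must be exploited to recover the lost edge, presumably by strengthening the induction hypothesis to deliver an orientation in which $a$ and $b$ have out- and in-eccentricity at most $d-1$. The harder case is minimum degree at least three, where no cheap vertex is available at all. Here I would split by density: when $G$ is dense (small $d$) I would follow the strategy of Cochran et al.\ for $d=2$, extracting a large clique, orienting it to have small diameter, and attaching the few remaining vertices within budget; when $G$ is not minimally $2$-edge-connected I would delete a removable edge to drive some degree toward two, taking care never to drop below the size threshold. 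Unifying the sparse suppression argument with such a dense-case argument across the entire intermediate range $3\le d\le n-3$, and in particular formulating a single strengthened induction hypothesis with eccentricity control that survives both the adjacent-neighbour and the minimum-degree-three reductions, is the crux on which the full conjecture hinges.
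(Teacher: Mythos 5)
This statement is Conjecture~\ref{con:m(n,d)}: the paper does not prove it, and there is therefore no proof to compare yours against. The paper establishes only the two boundary cases --- $d=2$ by citing Cochran, Czabarka, Dankelmann and Sz\'{e}kely, and $d=n-2$ via Theorems~\ref{thm:n-2} and~\ref{thm:sharp} --- and explicitly poses both the lower bound for $3\leq d\leq 4$ and the upper bound for $3\leq d\leq n-3$ as open problems in the concluding remarks. Your proposal correctly identifies this structure, and your suppression step is, in isolation, sound: the arithmetic works because $\binom{n-d}{2}$ is invariant along the diagonal $(n,d)\mapsto(n-1,d-1)$, the reinsertion of $v$ increases each distance by at most one since a directed path uses the arc $\overrightarrow{ab}$ at most once, and distances from $b$ and to $a$ are unaffected, giving $\dbar(v,y)\leq d$ and $\dbar(x,v)\leq d$. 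But as you concede yourself, this is not a proof: the step applies only when a degree-two vertex with non-adjacent neighbours exists, it must keep applying at every stage of the recursion down to $d=2$, and the adjacent-neighbour and minimum-degree-three configurations --- which you defer to a ``strengthened induction hypothesis'' that is never formulated --- are precisely the open content of the conjecture. A heuristic warning sign that no soft argument suffices: the known proof of the $d=2$ endpoint is a substantial paper in its own right, and your dense-case sketch essentially says ``do what they did, but for all $d$.''

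One concrete error of expectation: you claim the missing lower bound for $d\in\{3,4\}$ should follow from the paper's construction ``after a minor adjustment of how its sparse part is attached.'' The paper's proof of Theorem~\ref{thm:sharp}$(i)$ genuinely needs $d\geq 5$: the forced orientation of the tail $\overrightarrow{u_iu_{i+1}}$ for $4\leq i\leq d-1$ and the contradiction pairs (a $u_5,u_4$-path, and later $u_5$ to $u'_3$) require a vertex $u_5$ on the path distinct from $u_1,\dots,u_4$ and separated from the gadget at $u'_3$; for $d\leq 4$ the path $u_1\dots u_d$ is too short for any of these forcing arguments, the authors state plainly that they ``cannot prove the lower bound for the case $3\leq d\leq 4$,'' and they pose the construction of such extremal graphs as an open problem. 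So even the part of your plan you treat as routine is unresolved, and the proposal as a whole is a reasonable research programme rather than a proof.
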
	

The rest of this paper is organized as follows. Section~\ref{section-proofs} is dovoted to proving Theorems~\ref{thm:n-2} and~\ref{thm:sharp}. In Section~\ref{section-concludingremark}, we give some concluding remarks.

\section{Proofs}\label{section-proofs} 

Before we prove Theorems~\ref{thm:n-2} and~\ref{thm:sharp}, we give some definitions necessary for the proofs. Let $P_n$ denote the path on $n$ vertices. An orientation $O_G$ of a graph $G$ is  \emph{strong}  if for each $u,v\in V(G)$, there is a $u,v$-path in $\Gbar$, and $\Gbar$ is said to be \emph{strongly connected}. For vertices $u,v$ in a digraph $\overrightarrow{G}$, a vertex $u$ is an \textit{in-neighbor} of $v$ if $\overrightarrow{uv}\in E(\overrightarrow{G})$ and an \textit{out-neighbor}  of $v$ if $\overrightarrow{vu}\in E(\overrightarrow{G})$. For vertices $x,y$ in a directed path $\overrightarrow{P}$, we denote by $\overrightarrow{P}[x,y]$ the $x,y$-subpath of $\overrightarrow{P}$ (if exists) and let $\overrightarrow{P}(x,y]=\overrightarrow{P}[x,y]-x$ and $\overrightarrow{P}[x,y)=\overrightarrow{P}[x,y]-y$. For vertices $x,y$ in a directed cycle $\overrightarrow{C}$, we denote by $\overrightarrow{C}[x,y]$ the $x,y$-subpath of $\overrightarrow{C}$ and let $\overrightarrow{C}(x,y]=\overrightarrow{C}[x,y]-x$ and $\overrightarrow{C}[x,y)=\overrightarrow{C}[x,y]-y$.  For an $x,y$-walk $\overrightarrow{P}=xu_1u_2\ldots y$ and a $y,z$-walk $\overrightarrow{Q}=yv_1v_2\ldots z$ sharing a common endpoint $y$, we let $\overrightarrow{P}+\overrightarrow{Q}$ denote the directed walk  $xu_1u_2\ldots yv_1v_2\ldots z$. For an $x,y$-path in  $\overrightarrow{P}[x,y]$, the \textit{reverse path} of  an $x,y$-path in  $\overrightarrow{P}[x,y]$, denoted by  $\overleftarrow{P}[x,y]$, is the $y,x$-path obtained from redirecting all edges in  $\overrightarrow{P}[x,y]$. We write $\overrightarrow{G}[x,y]$ to represent some $x,y$-path in  $\overrightarrow{G}$ (if exists). The union $G_1 \cup G_2$ of graphs $G_1 = (V_1, E_1)$ and $G_2 = (V_2, E_2)$ is the graph with vertex set $V_1 \cup V_2$ and edge set $E_1 \cup E_2$. We denote the set $\{1,2,3,\ldots,k\}$ by $[k]$.

We start by providing examples of bridgeless graphs without an orientation of diameter at most $d$. For a given value $5\leq d\leq n-2$,  we define the graph $G_{n,d}$ of order $n$ and size ${n-d\choose 2}+n+1$ as follows. Join both endpoints of a path $u_1u_2u_3\ldots  u_d$ to all vertices of $K_{n-d-1}$ and add a vertex $u'_3$ joining to $u_2$, $u_3$ and $u_4$. Clearly, $G_{n,d}$ is bridgeless. We observe that the size of $G_{n,d}$ is 

\begin{center}
	${\displaystyle e(G_{n,d})={n-d-1\choose 2}+(d-1)+2(n-d-1)+3={n-d\choose 2}+n+1}$.
\end{center}

We would like to extend $G_{n,n-2}$ for $5\leq n\leq6$. For a given value $n\geq 5$, we define the graph $H_{n,n-2}$ of order $n$ and size $n+2$ as follows. Join a vertex $u'_3$ to $u_2$, $u_3$ and $u_4$ of a cycle $u_1u_2u_3\ldots  u_{n-1}$. Clearly, $H_{n,n-2}$ is bridgeless. We observe that the size of $H_{n,n-2}$ is   $e(H_{n,n-2})=n+2$. Note that $G_{n,n-2}=H_{n,n-2}$ for $n\geq 7$.

We will now prove Theorem~\ref{thm:sharp} by showing that $G_{n,d}$ has no orientation of diameter at most $d$ and  $H_{n,n-2}$ has no orientation of diameter at most $n-2$.

\begin{proof}[Proof of Theorem~\ref{thm:sharp}] $(i)$ We suppose to the contrary that $G_{n,d}$ has an orientation of diameter at most $d$. We observe that each vertex of degree two must have exactly one in-neighbor and one out-neighbor. Without loss of generality, $u_iu_{i+1}$ has the orientation $\overrightarrow{u_iu_{i+1}}$ for all $4\leq i \leq d-1$. Since $d\geq 5$, there is a $u_5,u_4$-path of length at most $d$ and this path must pass through 
	\begin{center}
		$u_5$,$u_6$,$\ldots$,$u_d,$$K_{n-d-1}$,$u_1$,$u_2$,$\{u_3,u'_3\}$,$u_4$
	\end{center}
in that order. Therefore, it must use exactly one of $u_3$ and $u'_3$, say $u_3$. Then $u_1u_2$, $u_2u_3$  and $u_3u_4$ have the orientations $\overrightarrow{u_1u_2}$, $\overrightarrow{u_2u_3}$ and $\overrightarrow{u_3u_4}$, respectively.
	
\textit{Case 1.}  $u_3u'_3$ has the orientation $\overrightarrow{u_3u'_3}$.

There is a $u'_3,u_1$-path of length at most $d$ and this path cannot pass through $u_3$ otherwise it is too long. Therefore, it must use the edge $\overrightarrow{u'_3u_4}$. There is a $u_4,u'_3$-path of length at most $d$ and this path cannot pass through $u_3$ otherwise it is too long. Therefore, it must use  the edge $\overrightarrow{u_2u'_3}$. Hence, any $u'_3,u_3$-path must pass through  
	\begin{center}
		$u'_3,u_4,u_5,\ldots,u_d,K_{n-d-1},u_1,u_2,u_3$
	\end{center} 
in that order, i.e., $\dbar_{\overrightarrow{G}_{n,d}}(u'_3,u_3)\geq d+1$ which is a contradiction.     	 

\textit{Case 2.} $u'_3u_3$ has the orientation $\overrightarrow{u'_3u_3}$

There is a $u_3,u'_3$-path of length at most $d$ and this path cannot pass through $\overrightarrow{u_4u_5}$ otherwise it is too long. Therefore, it must use the edge $\overrightarrow{u_4u'_3}$. There is a $u'_3,u_2$-path of length at most $d$ and this path cannot pass through $\overrightarrow{u'_3u_3}$ otherwise it is too long. Therefore, it must use the edge $\overrightarrow{u'_3u_2}$. Hence, any $u_5,u'_3$-path must pass through  
	\begin{center}
		$u_5$,$u_6$,$\ldots$,$u_d$,$K_{n-d-1}$,$u_1$,$u_2$,$u_3$,$u_4$,$u'_3$
	\end{center}
in that order, i.e., $\dbar_{\overrightarrow{G}_{n,d}}(u_5,u'_3)\geq d+1$ which is a contradiction.    
		
Therefore, $G_{n,d}$ has no orientation of diameter at most $d$. Hence, $m(n,d) \geq {n-d\choose 2}+n+2$.	

$(ii)$ The proof is the same as $(i)$ execpt we omit $K_{n-d-1}$ when $3\leq d\leq 4$ and identify $u_5=u_1$ when $d=3$.

Therefore, $H_{n,n-2}$ has no orientation of diameter at most $d$. Hence, $m(n,n-2)\geq n+3$.	\end{proof}
	
We have shown that $m(n,d) \geq {n-d\choose 2}+n+2$ for $5\leq d \leq n-2$. Next, we will prove Theorem~\ref{thm:n-2} which implies that this lower bound is the exact value of $m(n,d)$ for the case $d=n-2$. The proof is divided into two lemmas, namely, Lemmas~\ref{lem:Hamiltonian} and~\ref{lem:non-Hamiltonian},  by the Hamitoniancity of the graph. We first consider Hamiltonian graphs.

\begin{lem}\label{lem:Hamiltonian}
	Let $G$ be a Hamiltonian graph of order $n\geq5$ and size at least $n+3$. Then $G$ has an orientation of diameter at most $n-2$.
\end{lem}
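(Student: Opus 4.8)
The plan is to exploit the Hamiltonian cycle as a backbone and the extra edges to create shortcuts, much as one orients a cycle and then re-routes using chords. Let $C = v_1 v_2 \ldots v_n v_1$ be a Hamiltonian cycle of $G$. Since $e(G) \geq n+3$, there are at least three chords in addition to the $n$ cycle edges. My first step is to orient the Hamiltonian cycle consistently, say $\overrightarrow{v_1 v_2}, \overrightarrow{v_2 v_3}, \ldots, \overrightarrow{v_n v_1}$, which already yields a strongly connected orientation of diameter exactly $n-1$ (the maximum possible by Theorem~\ref{thm:robbin}). The whole difficulty is to use the three extra chords to reduce the diameter by just one, from $n-1$ down to $n-2$. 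Notice that in the cyclically oriented $C$ alone, the unique pair realizing distance $n-1$ is $(v_{i+1}, v_i)$ for each $i$: going forward from $v_{i+1}$ all the way around to $v_i$. So the obstruction is local and pairwise, and my goal reduces to ensuring that no ordered pair has distance $n-1$.

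The key idea is that orienting a chord $v_a v_b$ (with $a < b$) as $\overrightarrow{v_a v_b}$ creates a forward shortcut that skips the arc $v_a v_{a+1} \ldots v_b$, so any ordered pair $(v_{s}, v_{t})$ whose forward route passes over this arc gets shortened. I would first handle the generic case where the three extra edges can be chosen to be genuine chords (not multi-edges forced by low order). I plan to argue that with a cyclic orientation of $C$, a single chord $\overrightarrow{v_a v_b}$ already reduces to $n-2$ the distance of every ordered pair except possibly those whose shortest forward path is the ``long way around'' avoiding the chord's benefit; then I will show that after carefully orienting, each of the few surviving distance-$(n-1)$ pairs is killed by one of the remaining chords or by reversing a chord's orientation. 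A clean way to organize this is to observe that the pair $(v_{i+1}, v_i)$ has its distance dropped below $n-1$ precisely when some oriented chord lets us bypass at least one cycle edge on the forward arc from $v_{i+1}$ around to $v_i$; so I would track, for each $i$, whether a chord ``covers'' the index $i$ in the appropriate sense, and then show three suitably placed and oriented chords cover all $n$ indices.

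The main obstacle I anticipate is the extremal bookkeeping: a single forward chord $\overrightarrow{v_a v_b}$ may fail to help the pairs $(v_{b}, v_{b-1}), (v_{b+1}, v_b), \ldots$ whose forward arcs do not traverse the skipped segment, so I must verify that three chords, possibly with mixed orientations, jointly cover every critical pair. This is genuinely delicate because the three chords could be concentrated in one region of the cycle (e.g. sharing an endpoint or forming a small triangle), leaving a long uncovered arc; I expect the heart of the argument to be a case analysis on the cyclic positions of the chord endpoints, perhaps splitting on whether two chords are nested, crossing, or disjoint. I would also need to treat small cases $n = 5, 6$ separately, where the three extra edges may not all be chords in the usual sense (multi-edges or the cycle being short), checking those by hand or by the explicit structure of $H_{n,n-2}$ and $G_{n,n-2}$. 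My fallback, if the direct covering argument gets unwieldy, is to orient two chords to split $C$ into shorter directed cycles sharing vertices, then bound distances through the shared vertices, reserving the third chord to patch the one or two pairs that remain at distance $n-1$.
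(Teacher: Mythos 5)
Your proposal follows essentially the same route as the paper: orient the Hamiltonian cycle cyclically, observe that the only pairs at distance $n-1$ are the consecutive pairs $(v_{k},v_{k-1})$, and then orient the (at least three) chords so that every such pair lies on a directed cycle with at most $n-1$ vertices --- your ``fallback'' of splitting $C$ into shorter directed cycles sharing vertices is in fact the paper's main mechanism. The case analysis you anticipate is exactly the paper's and is shorter than you fear: either some two chords are nested (orient the outer one backward and the inner one forward, and the two resulting cycles already cover all consecutive pairs), or all three chords pairwise cross, in which case alternating their orientations yields three cycles whose vertex sets cover every consecutive pair.
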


\begin{proof} Let $C$ be a Hamiltonian cycle contained in a bridgeless graph $G$ of order $n$ with  
	
	\begin{center}
		$V(C)=\{v_1,v_2,\dots,v_n\}$ \,\,and\,\,  $E(C)=\{e_1,e_2,\ldots,e_n\}$, 
	\end{center}
	
\noindent where $e_k=v_kv_{k+1}$ for $k\in[n]$ and the indices are taken modulo $n$.  Let $\overrightarrow{C}$ be the directed cycle obtained from $C$ by  $\overrightarrow{e_k}=\overrightarrow{v_kv_{k+1}}$ for $k\in[n]$. We note that any two vertices $v_k,v_\ell$ in $G$,  $\dbar_{\overrightarrow{C}}(v_k,v_\ell)\leq n-2$ if $\ell\neq k-1 $. By $e(G)\geq n+3$ and $e(C)= n$, there exist edges $f_{1}=v_{i_1}v_{j_1}, f_{2}=v_{i_2}v_{j_2}$ and $f_{3}=v_{i_3}v_{j_3}$ in $E(G)\setminus E(C)$.  To show that $G$ has an orientation of diameter at most $n-2$, it suffices to assign the directions to $f_{1}$, $f_{2}$ and $f_{3}$ in order to create a $v_k,v_{k-1}$-path omitting some vertex, for all $k\in[n]$. We divide the proof into two cases.
	
\textit{Case 1.}  $C$ has a pair of parallel chords. (They can share an endpoint.)

\noindent  Without loss of generality, we suppose that $1\leq i_1\leq i_2<j_2<j_1\leq n$.	We orient the edges $f_1$ and $f_2$ as $\overrightarrow{f_1}=\overrightarrow{v_{j_1}v_{i_1}}$ and $\overrightarrow{f_2}=\overrightarrow{v_{i_2}v_{j_2}}$, respectively. Let

 \begin{center}
 	$\overrightarrow{C_1}=\overrightarrow{C}[v_{i_1},v_{j_1}]+\overrightarrow{f_1}$ and $\overrightarrow{C_2}=\overrightarrow{C}[v_{j_2},v_{i_2}]+\overrightarrow{f_2}$.
 \end{center}

\noindent We note that $V(\overrightarrow{C_1})\leq n-2$ and  $V(\overrightarrow{C_2})\leq n-2$. Let $k\in[n]$. We have  $v_k,v_{k-1}\in V(\overrightarrow{C_\ell})$ for some $\ell\in\{1,2\}$.
Therefore, 

\begin{center}
	$\dbar_{\overrightarrow{G}}(v_k,v_{k-1})\leq\dbar_{\overrightarrow{C_\ell}}(v_k,v_{k-1})\leq n-2$. 
\end{center}
Each of the remaining edges in $G$ can be oriented with arbitrary direction. Hence, $G$ has an orientation of diameter at most $n-2$. 

\textit{Case 2.} There is no a pair of parallel edges. 

\noindent Without loss of generality, we suppose that $1\leq i_1<i_2<i_3<j_1<j_2<j_3\leq n$. 	We orient the edges $f_1$, $f_2$ and $f_3$ as  $\overrightarrow{f_1}=\overrightarrow{v_{j_1}v_{i_1}}$,  $\overrightarrow{f_2}=\overrightarrow{v_{i_2}v_{j_2}}$ and $\overrightarrow{f_3}=\overrightarrow{v_{j_3}v_{i_3}}$, respectively. 
Let 

\begin{center}
	$\overrightarrow{C_1}=\overrightarrow{C}[v_{i_1},v_{j_1}]+\overrightarrow{f_1}$,  $\overrightarrow{C_2}=\overrightarrow{C}[v_{j_2},v_{i_2}]+\overrightarrow{f_2}$ and $\overrightarrow{C_3}=\overrightarrow{C}[v_{i_3},v_{j_3}]+\overrightarrow{f_3}$.
\end{center}

\noindent  We note that $V(\overrightarrow{C_\ell})\leq n-2$ for $\ell\in\{1,2,3\}$. Let $k\in[n]$, we have  $v_k,v_{k-1}\in V(\overrightarrow{C_\ell})$ for some $\ell\in\{1,2,3\}$.
Therefore, 

\begin{center}
	$\dbar_{\overrightarrow{G}}(v_k,v_{k-1})\leq\dbar_{\overrightarrow{C_\ell}}(v_k,v_{k-1})\leq n-2$. 
\end{center}
Each of the remaining edges in $G$ can be oriented with arbitrary direction. Hence, $G$ has an orientation of diameter at most $n-2$. 
\end{proof}

Next, we will deal with non-Hamiltonian graphs.

\begin{lem}\label{lem:non-Hamiltonian}
	Let $G$ be a non-Hamiltonian bridgeless graph of order $n\geq5$ and size at least $n+3$. Then $G$ has an orientation of diameter at most $n-2$.
\end{lem}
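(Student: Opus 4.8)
The plan is to reduce the non-Hamiltonian case to the Hamiltonian analysis of Lemma~\ref{lem:Hamiltonian} by finding a long cycle and handling the few vertices outside it directly. Since $G$ is bridgeless, every vertex lies on some cycle, and by Robbins' theorem (Theorem~\ref{thm:robbin}) $G$ admits a strong orientation. First I would take a longest cycle $C$ in $G$ and set $c=|V(C)|$; because $G$ is non-Hamiltonian we have $c\le n-1$, so there are at least one vertex outside $C$. The key structural point is that an orientation achieving diameter $\le n-2$ on the cycle $C$ (exactly as built in Lemma~\ref{lem:Hamiltonian}, using chords of $C$) already gives, for any two vertices on $C$, a directed path of length at most $c-1\le n-2$ that omits at least one vertex of $C$. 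So the burden reduces to routing paths to and from the $n-c$ vertices lying off $C$ while keeping every directed distance at most $n-2$.

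The main work is therefore a case analysis on how the off-cycle vertices attach to $C$. First I would observe that since $C$ is a longest cycle, no vertex outside $C$ can have two neighbors on $C$ whose cyclic distance along $C$ is large (otherwise we could reroute to obtain a longer cycle), which constrains the attachment pattern. For a vertex $w\notin V(C)$ with neighbors on $C$, I would orient the edges between $w$ and $C$ so that $w$ has at least one in-neighbor and one out-neighbor on $C$ (possible because bridgelessness forces $w$ to lie on a cycle, hence to have degree $\ge 2$ into the rest of the graph), and then splice $w$ into a directed path along $C$, replacing a cycle-edge by a length-two detour through $w$. The extra $+3$ edges beyond a spanning cycle/tree structure are exactly what guarantees enough chords or extra attachment edges to absorb these detours without exceeding the length bound $n-2$; here the slack is that the relevant paths omit a vertex, so a detour costs one unit but we have one unit of room from the omitted vertex.

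Concretely, I would structure the argument around a near-spanning cycle together with the at most a few exceptional vertices: if $c=n-1$ there is a single external vertex $w$, and I would show $w$ has at least two neighbors on $C$ (it cannot have exactly one, as that edge would be a bridge or create a longer cycle), then orient one incoming and one outgoing edge at $w$ and run the Lemma~\ref{lem:Hamiltonian} construction on $C$ with $w$ patched in, checking that for every ordered pair the shorter arc plus at most one detour stays within $n-2$. For smaller $c$ I expect an inductive or block-decomposition flavor, peeling off ear paths (paths whose internal vertices are off $C$) guaranteed by bridgelessness and orienting each ear consistently with the cyclic orientation so that traversing an ear is never forced onto every source-target pair simultaneously.

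The hard part will be the bookkeeping that no single ordered pair $(u,v)$ is forced to traverse \emph{all} the detours at once: the length bound $n-2$ has very little slack, so I must ensure that for each pair there exists a directed path that both omits at least one vertex and uses at most the available detours. I anticipate the cleanest route is to prove that the three guaranteed extra edges always yield either two suitably nested chords on a long enough cycle (reducing exactly to Case~1 of Lemma~\ref{lem:Hamiltonian}) or an ear structure that can be collapsed into an equivalent Hamiltonian-with-chords configuration on a relabeled cycle, and then quote the length estimates already established. Verifying that this reduction never needs a path of length $n-1$ — in particular handling the smallest orders $n=5,6$ where the external vertices form a nontrivial fraction of $V(G)$ — is the delicate step I would check by hand.
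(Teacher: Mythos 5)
Your proposal has a genuine gap: it is an outline that defers exactly the hard configurations, and the reduction to Lemma~\ref{lem:Hamiltonian} does not go through as stated. The Lemma~\ref{lem:Hamiltonian} construction needs chords of the cycle being oriented, but in a non-Hamiltonian graph the $e(G)-n\geq 3$ extra edges may all be incident to vertices off the longest cycle $C$ (for instance, when $G$ is a theta-like graph in which $C$ misses a long ear), so neither Case~1 nor Case~2 of that lemma applies to $C$. This is precisely the regime you wave at with ``an inductive or block-decomposition flavor,'' and it is where the work lies: an ear with $t$ internal vertices costs $+t$ when spliced into a route, not $+1$, so the ``one unit of slack from the omitted vertex'' does not cover it; and for ordered pairs with one endpoint deep inside an ear, both directions must be routed, which typically forces traversing the entire ear plus most of $C$. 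In addition, your structural claim about longest cycles is backwards: if $w\notin V(C)$ has two neighbours on $C$ at cyclic distance $k$, rerouting through $w$ yields a cycle of length $|C|-k+2$, so maximality of $C$ only forbids $k=1$ (adjacent neighbours); it does not constrain the neighbours from being far apart, so the attachment pattern is not restricted in the way your argument needs.

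For contrast, the paper avoids longest cycles entirely. It starts from a strong orientation (Theorem~\ref{thm:robbin}); if the diameter is not already at most $n-2$, some shortest path has length $n-1$ and is therefore a directed Hamiltonian path $\overrightarrow{P}=w_1\ldots w_n$. Strong connectivity supplies an arc into $w_1$ and an arc out of $w_n$ (chosen extremally), plus, when needed, a directed path $\overrightarrow{R}$ back from the tail segment to the head segment; the proof then re-orients a spanning subgraph built from $\overrightarrow{P}$, these arcs and $\overrightarrow{R}$, and checks pair by pair that every ordered pair admits a walk omitting some vertex, using the two spare edges guaranteed by $e(G)\geq n+3$ to shortcut the two potentially bad pairs. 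If you want to salvage your plan, you would need to either prove that the three extra edges always produce a long cycle with usable chords, or carry out the ear analysis in full, including the case of a single long ear; as written, the central difficulty is assumed rather than resolved.
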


\begin{proof} Let $G$ be a non-Hamiltonian bridgeless graph of order $n\geq5$ with $e(G)\geq n+3$. By Theorem~\ref{thm:robbin}, $G$ has a strong orientation $O_G$. If $\diam(\Gbar)\leq n-2$, we are done. Otherwise,  there exists an $x_1,x_2$-path $\overrightarrow{P}$ in $\Gbar$ of length $n-1$.  We write
	
\begin{center}
	$\overrightarrow{P}=w_1w_2w_3\ldots w_n$,
\end{center}	
where $w_1=x_1$ and $w_n=x_2$. Since $\Gbar$ is strongly connected, there exist $\overrightarrow{e_1}=\overrightarrow{y_1x_1}$ and $\overrightarrow{e_2}=\overrightarrow{x_2y_2}$ in $E(\Gbar)\setminus E(\overrightarrow{P})$. Without loss of generality, choose $y_1$ and $y_2$ such that $\dbar_{\overrightarrow{P}}(x_1,y_1)$ and $\dbar_{\overrightarrow{P}}(y_2,x_2)$ are maximal. By the fact that $G$ is non-Hamiltonian, we have $y_1\neq x_2$ and $y_2\neq x_1$.  We divide the proof into two cases.
	
\textit{Case 1.}  $\dbar_{\overrightarrow{P}}(x_1,y_2)\leq\dbar_{\overrightarrow{P}}(x_1,y_1)$

Let $u_1$ be the out-neighbor of $y_1$ in $\overrightarrow{P}$ and $u_2$ be the in-neighbor of $y_2$ in $\overrightarrow{P}$ and let

\begin{center}
	$\overrightarrow{Q}=\overrightarrow{P}[u_1,x_2]+\overrightarrow{e_2}+\overrightarrow{P}[y_2,y_1]+\overrightarrow{e_1}+\overrightarrow{P}[x_1,u_2]$.
\end{center}
We write $\overrightarrow{Q}=w'_1w'_2w'_3\ldots w'_n$,
where $w'_1=u_1$ and $w'_n=u_2$.  We will show that $\dbar_{\overrightarrow{G}}(p,q)\leq n-2$ unless $(p,q)\in\{(x_1,x_2),(u_1,u_2)\}$. We set

 \begin{center}
	$\overrightarrow{C_1}=\overrightarrow{P}[x_1,y_1]+\overrightarrow{e_1}$ \,\,and\,\, $\overrightarrow{C_2}=\overrightarrow{P}[y_2,x_2]+\overrightarrow{e_2}$.
\end{center}

\noindent Let $w_i,w_j\in V(\Gbar)$ where $i<j$. From $w_i$ to $w_j$, we have
	
	\begin{center}
	 $\dbar_{\overrightarrow{G}}(w_i,w_j)\leq\dbar_{\overrightarrow{P}}(w_i,w_j)\leq\dbar_{\overrightarrow{P}}(x_1,x_2)= n-1,$ 
	\end{center}
\noindent and  the equality holds only when $(w_i,w_j)=(x_1,x_2)$. From $w_j$ to $w_i$, if $w_i,w_j\in\overrightarrow{C_\ell}$ for some $\ell\in\{1,2\}$, we have

	\begin{center}
		$\dbar_{\overrightarrow{G}}(w_j,w_i)\leq\dbar_{\overrightarrow{C_\ell}}(w_j,w_i)\leq n-2$.
	\end{center}

\noindent Otherwise, since $i<j$, $w_i\in V(\overrightarrow{C_1})\setminus V(\overrightarrow{P}[y_2,y_1])$ and $w_j\in V(\overrightarrow{C_2})\setminus V(\overrightarrow{P}[y_2,y_1])$, i.e., $w_i\in\overrightarrow{P}[x_1,u_2]$ and $w_j\in\overrightarrow{P}[u_1,x_2]$. We have

	\begin{center}
		$\dbar_{\overrightarrow{G}}(w_j,w_i)\leq\dbar_{\overrightarrow{Q}}(w_j,w_i)\leq\dbar_{\overrightarrow{Q}}(u_1,u_2)= n-1,$ 
	\end{center}

\noindent and the equality holds only when ($w_j,w_i)=(u_1,u_2)$. Hence, $\dbar_{\overrightarrow{G}}(p,q)\leq n-2$ unless $(p,q)\in\{(x_1,x_2),(u_1,u_2)\}$.

Since $e(G)\geq n+3$ and $|E(P)\cup E(Q)|= n+1$, there exist $e_3=w_iw_j$ and $e_4=w'_kw'_{\ell}$  in $E(G)\setminus \left(E(P)\cup E(Q)\right)$ where $i<j$ and $k<\ell$. We orient $\overrightarrow{e_3}=\overrightarrow{w_iw_j}$ and  $\overrightarrow{e_4}=\overrightarrow{w'_kw'_\ell}$. Therefore, there exist an $x_1,x_2$-path $\overrightarrow{P}[x_1,w_i]+\overrightarrow{e_3}+\overrightarrow{P}[w_j,x_2]$ omitting some vertex in $\overrightarrow{P}(w_i,w_j)$ and a $u_1,u_2$-path  $\overrightarrow{Q}[u_1,w'_k]+\overrightarrow{e_4}+\overrightarrow{Q}[w'_\ell,u_2]$ omitting some vertex in $\overrightarrow{Q}(w'_k,w'_\ell)$. Hence, $G$ has an orientation of diameter at most $n-2$.

\textit{Case 2.}  $\dbar_{\overrightarrow{P}}(x_1,y_2)>\dbar_{\overrightarrow{P}}(x_1,y_1)$ 

\noindent We note that $\overrightarrow{P}[x_1,y_1]$ and $\overrightarrow{P}[y_2,x_2]$ are disjoint. By the fact that $O_G$ is a strong orientation of $G$, there is an $a_2,a_1$-path $\overrightarrow{R}$ from a vertex in $\overrightarrow{P}[y_2,x_2]$ to a vertex in $\overrightarrow{P}[x_1,y_1]$,
where $V(\overrightarrow{P}[y_2,x_2])\cap V(\overrightarrow{R})=\{a_2\}$ and  $V(\overrightarrow{P}[x_1,y_1])\cap V(\overrightarrow{R})=\{a_1\}$. By the definition of $y_1$ and $y_2$, we have $a_1\neq x_1$ and $a_2\neq x_2$. Let $\overrightarrow{f_1},\overrightarrow{f_2},\dots,\overrightarrow{f_\ell}$ be the edges in $E(\overrightarrow{R})\setminus E(\overrightarrow{P})$ defined in order appearance in $\overrightarrow{R}$.  We write  $\overrightarrow{f_1}=\overrightarrow{a_2b_2}$, $\overrightarrow{f_2}=\overrightarrow{c_2d_2}$  and $\overrightarrow{f_\ell}=\overrightarrow{b_1a_1}$.

\textit{Case 2.1.} 	$\dbar_{\overrightarrow{R}}(a_2,a_1) = 1$. 

\noindent Let $\overrightarrow{S}=\overleftarrow{P}[a_1,a_2]\cup\overleftarrow{R}[a_2,a_1]$ be the union of the reverse paths of $\overrightarrow{P}[a_1,a_2]$ and  $\overrightarrow{R}[a_2,a_1]$. Note that $\overrightarrow{S}$ is a cycle.  We orient a spanning subgraph of $G$ as follows:
	
	\begin{center}
		$\overrightarrow{H}=\overrightarrow{P}[x_1,a_1]\cup \overrightarrow{S}\cup \overrightarrow{P}[a_2,x_2]  \cup\{\overrightarrow{e_1},\overrightarrow{e_2}\}$,
	\end{center}

\noindent and set 

\begin{center}
	$X_1=V(\overrightarrow{P}[x_1,a_1))$, $W=V(\overrightarrow{P}[a_1,a_2])$, $X_2=V(\overrightarrow{P}(a_2,x_2])$.
\end{center}

\noindent We now check that for any two vertices $p$ and $q$ in $G$, there exists a $p,q$-walk omitting some vertex.

For $p\in X_1$ and $q\in W$, $\overrightarrow{P}[p,a_1]+\overrightarrow{S}[a_1,q]$ is a $p,q$-path omitting  $x_2$. 

For $p\in W$ and $q\in X_1$, $\overrightarrow{S}[p,y_1]+\overrightarrow{e_1}+\overrightarrow{P}[x_1,q]$ is a $p,q$-path omitting $x_2$. 

For $p,q\in X_1$, $\overrightarrow{P}[p,a_1]+\overrightarrow{S}[a_1,y_1]+\overrightarrow{e_1}+\overrightarrow{P}[x_1,q]$ is a $p,q$-walk omitting $x_2$.  

For $p\in X_2$ and $q\in W$, $\overrightarrow{P}[p,x_2]+\overrightarrow{e_2}+\overrightarrow{S}[y_2,q]$ is a $p,q$-path omitting $x_1$.

For $p\in W$ and $q\in X_2$, $\overrightarrow{S}[p,a_2]+\overrightarrow{P}[a_2,q]$  is a $p,q$-path omitting $x_1$.  

For $p,q\in X_2$, $\overrightarrow{P}[p,x_2]+\overrightarrow{e_2}+  \overrightarrow{S}[y_2,a_2]+\overrightarrow{P}[a,q]$ is a $p,q$-walk omitting $x_1$.  

For $p,q\in W$, $\overrightarrow{S}[p,q]$ is a $p,q$-path omitting $x_1$ and $x_2$.

For $p\in X_1$ and $q\in X_2$,  $\overrightarrow{P}[p,a_1]+\overleftarrow{R}[a_2,a_1]+\overrightarrow{P}[a_2,q]$  is a $p,q$-path omitting some vertex in  $\overrightarrow{P}(a_1,a_2)$.

For $p\in X_2$ and $q\in X_1$,  let 
\begin{center}
	$\overrightarrow{Q}=\overrightarrow{P}[p,x_2]+\overrightarrow{e_2}+\overrightarrow{S}[y_2,y_1]+\overrightarrow{e_1}+\overrightarrow{P}[x_1,q]$
\end{center}
 be a $p,q$-path.  We write $\overrightarrow{Q}=w'_1w'_2w'_3\ldots w'_k$, where $w'_1=p$ and $w'_k=q$.  If $k<n$,  then $\overrightarrow{Q}$ is a $p,q$-path omitting some vertex. Otherwise, since $e(G)\geq n+3$ and $e(H)= n+2$.  So, there exists an edge $e_3=w'_iw'_j\in E(G)\setminus E(H)$, where $i<j-1$. If we orient the edge $e_3$ as $\overrightarrow{e_3}=\overrightarrow{w'_iw'_j}$, then there is a $p,q$-path  $\overrightarrow{Q}[p,w'_i]+\overrightarrow{e_3}+\overrightarrow{Q}[w'_j,q]$ omitting some vertex in $\overrightarrow{Q}(w'_i,w'_j)$.
 
\textit{Case 2.2.} $\dbar_{\overrightarrow{R}}(a_1,a_2)\geq2$. 

\textit{Case 2.2.1.} $a_2\neq y_2$.

\noindent Let $\overrightarrow{S}=\overleftarrow{P}[a_1,a_2]\cup\overleftarrow{R}[a_2,a_1]$ be the union of the reverse paths of $\overrightarrow{P}[a_1,a_2]$ and  $\overrightarrow{R}[a_2,a_1]$. Note that $\overrightarrow{S}$ is strongly connected since $\overleftarrow{P}[a_1,a_2]+\overleftarrow{R}[a_2,a_1]$ is a spanning closed walk. We orient a spanning subgraph of $G$ as follows: 
	
	\begin{center}
		$\overrightarrow{H}=\overrightarrow{P}[x_1,a_1]\cup \overrightarrow{S}\cup \overrightarrow{P}[a_2,x_2]\cup\{\overrightarrow{e_1},\overrightarrow{e_2}\}$,
	\end{center}
	
\noindent and set 
	
	\begin{center}
		$X_1=V(\overrightarrow{P}[x_1,a_1))$, $W=V(\overrightarrow{P}[a_1,a_2])$, $X_2=V(\overrightarrow{P}(a_2,x_2])$.
	\end{center}
	
\noindent We now check that for any two vertices $p$ and $q$ in $G$, there exists a $p,q$-walk omitting some vertex.

For $p\in X_1$ and $q\in W$,  $\overrightarrow{P}[p,a_1]+\overrightarrow{S}[a_1,q]$ is a $p,q$-path omitting  $x_2$. 

For $p\in W$ and $q\in X_1$,    $\overrightarrow{S}[p,y_1]+\overrightarrow{e_1}+\overrightarrow{P}[x_1,q]$ is a $p,q$-path omitting $x_2$. 

For $p,q\in X_1$,  $\overrightarrow{P}[p,a_1]+\overrightarrow{S}[a_1,y_1]+\overrightarrow{e_1}+\overrightarrow{P}[x_1,q]$ is a $p,q$-walk omitting $x_2$.  

For $p\in X_2$ and $q\in W$,  $\overrightarrow{P}[p,x_2]+\overrightarrow{e_2}+\overrightarrow{S}[y_2,q]$ is a $p,q$-path omitting $x_1$.

For $p\in W$ and $q\in X_2$,   $\overrightarrow{S}[p,a_2]+\overrightarrow{P}[a_2,q]$  is a $p,q$-path omitting $x_1$.  

For $p,q\in X_2$,   $\overrightarrow{P}[p,x_2]+\overrightarrow{e_2}+  \overrightarrow{S}[y_2,a_2]+\overrightarrow{P}[a_2,q]$ is a $p,q$-walk omitting $x_1$. 

For $p,q\in W$,  $\overrightarrow{S}[p,q]$ is a $p,q$-path omitting $x_1$ and $x_2$.

For $p\in X_1$ and $q\in X_2$,   $\overrightarrow{P}[p,a_1]+\overleftarrow{R}[a_2,a_1]+\overrightarrow{P}[a_2,q]$  is a $p,q$-path omitting $y_2$ since $a_2\neq y_2$.

For $p\in X_2$ and $q\in X_1$,   $\overrightarrow{P}[p,x_2]+\overrightarrow{e_2}+\overleftarrow{P}[y_1,y_2]+\overrightarrow{e_1}+\overrightarrow{P}[x_1,q]$  is a $p,q$-path omitting $a_2$ since $a_2\neq y_2$.

\textit{Case 2.2.2.}  $b_2\neq c_2$. 

\noindent Let $\overrightarrow{S}=\overleftarrow{P}[b_2,a_2]\cup\overleftarrow{R}[a_2,b_2]$ be the union of the reverse paths of $\overrightarrow{P}[b_2,a_2]$ and  $\overrightarrow{R}[a_2,b_2]$. Note that $\overrightarrow{S}$ is a cycle. We orient a spanning subgraph of $G$ as follows:

\begin{center}
	$\overrightarrow{H}=\overrightarrow{P}[x_1,b_2]\cup\overrightarrow{R}[c_2,a_1]\cup \overrightarrow{S}\cup \overrightarrow{P}[a_2,x_2]\cup\{\overrightarrow{e_1},\overrightarrow{e_2}\}$,
\end{center}

\noindent and set 

\begin{center}
	$X_1=V(\overrightarrow{P}[x_1,b_2))$, $W=V(\overrightarrow{P}[b_2,a_2])$, $X_2=V(\overrightarrow{P}(a_2,x_2])$.
\end{center}

\noindent We now check that for any two vertices $p$ and $q$ in $G$, there exists a $p,q$-walk omitting some vertex.

For $p\in X_1$ and $q\in W$, $\overrightarrow{P}[p,b_2]+\overrightarrow{S}[b_2,q]$ is a $p,q$-path omitting  $x_2$. 

For $p\in W$ and $q\in X_1$,  $\overrightarrow{S}[p,c_2]+\overrightarrow{R}[c_2,a_1]+\overrightarrow{P}[a_1,y_1]+\overrightarrow{e_1}+\overrightarrow{P}[x_1,q]$ is a $p,q$-path omitting $x_2$. 

For $p,q\in X_1$,    $\overrightarrow{P}[p,b_2]+\overrightarrow{S}[b_2,c_2]+\overrightarrow{R}[c_2,a_1]+\overrightarrow{P}[a_1,y_1]+\overrightarrow{e_1}+\overrightarrow{P}[x_1,q]$ is a $p,q$-walk omitting $x_2$.   

For $p\in X_2$ and $q\in W$,    $\overrightarrow{P}[p,x_2]+\overrightarrow{e_2}+\overrightarrow{S}[y_2,q]$ is a $p,q$-path omitting $x_1$.

For $p\in W$ and $q\in X_2$,  $\overrightarrow{S}[p,a_2]+\overrightarrow{P}[a_2,q]$  is a $p,q$-path omitting $x_1$.  

For $p,q\in X_2$,  $\overrightarrow{P}[p,x_2]+\overrightarrow{e_2}+  \overrightarrow{S}[y_2,a_2]+\overrightarrow{P}[a_2,q]$ is a $p,q$-walk omitting $x_1$. 

For $p,q\in W$, $\overrightarrow{S}[p,q]$ is a $p,q$-path omitting $x_1$ and $x_2$.

For $p\in X_1$ and $q\in X_2$,   $\overrightarrow{P}[p,b_2]+\overleftarrow{R}[a_2,b_2]+\overrightarrow{P}[a_2,q]$  is a $p,q$-path omitting some vertex in  $\overrightarrow{P}(b_2,a_2)$. 

For $p\in X_2$ and $q\in X_1$,  $\overrightarrow{P}[p,x_2]+\overrightarrow{e_2}+\overleftarrow{P}[c_2,y_2]+\overrightarrow{R}[c_2,a_1]+\overrightarrow{P}[a_1,y_1]+\overrightarrow{e_1}+\overrightarrow{P}[x_1,q]$  is a $p,q$-path omitting $b_1$ since $b_2\neq c_2$. 

\textit{Case 2.2.3.} $a_2=y_2$ and $b_2=c_2$.

\noindent Let $\overrightarrow{S}=\overleftarrow{P}[a_1,b_1]\cup\overleftarrow{R}[b_1,a_1]$ be the union of the reverse paths of $\overrightarrow{P}[a_1,b_1]$ and  $\overrightarrow{R}[b_1,a_1]$. Note that $\overrightarrow{S}$ is a cycle. We orient a spanning subgraph of $G$ as follows:

\begin{center}
	$\overrightarrow{H}=\overrightarrow{P}[x_1,a_1]\cup \overrightarrow{S}\cup\overrightarrow{R}[a_2,c_1]\cup \overrightarrow{P}[b_1,x_2]\cup\{\overrightarrow{e_1},\overrightarrow{e_2}\}$,
\end{center}

\noindent and set 

\begin{center}
	$X_1=V(\overrightarrow{P}[x_1,a_1))$, $W=V(\overrightarrow{P}[a_1,b_1])$, $X_2=V(\overrightarrow{P}(b_1,x_2])$.
\end{center}

\noindent We now check that for any two vertices $p$ and $q$ in $G$, there exists a $p,q$-walk omitting some vertex.

For $p\in X_1$ and $q\in W$,  $\overrightarrow{P}[p,a_1]+\overrightarrow{S}[a_1,q]$ is a $p,q$-path omitting  $x_2$. 

For $p\in W$ and $q\in X_1$, $\overrightarrow{S}[p,y_1]+\overrightarrow{e_1}+\overrightarrow{P}[x_1,q]$ is a $p,q$-path omitting $x_2$. 

For $p,q\in X_1$, $\overrightarrow{P}[p,a_1]+\overrightarrow{S}[a_1,y_1]+\overrightarrow{e_1}+\overrightarrow{P}[x_1,q]$ is a $p,q$-walk omitting $x_2$.   

For $p\in X_2$ and $q\in W$,  $\overrightarrow{P}[p,x_2]+\overrightarrow{e_2}+\overrightarrow{R}[y_2,c_1]+\overrightarrow{S}[c_1,q]$ is a $p,q$-walk omitting $x_1$.

For $p\in W$ and $q\in X_2$, $\overrightarrow{S}[p,b_1]+\overrightarrow{P}[b_1,q]$  is a $p,q$-path omitting $x_1$.  

For $p,q\in X_2$, $\overrightarrow{P}[p,x_2]+\overrightarrow{e_2}+\overrightarrow{R}[y_2,c_1]+  \overrightarrow{S}[c_1,b_1]+\overrightarrow{P}[b_1,q]$ is a $p,q$-walk omitting $x_1$. 

For $p,q\in W$, $\overrightarrow{S}[p,q]$ is a $p,q$-path omitting $x_1$ and $x_2$.

For $p\in X_1$ and $q\in X_2$, $\overrightarrow{P}[p,a_1]+\overleftarrow{R}[b _1,a_1]+\overrightarrow{P}[b_1,q]$  is a $p,q$-path omitting some vertex in  $\overrightarrow{P}(a_1,b_1)$. 

For $p\in X_2$ and $q\in X_1$,
	\begin{itemize}
		\item if $p\in V(\overrightarrow{P}(a_2,x_2])$, then 
		\begin{center}
			 $\overrightarrow{P}[p,x_2]+\overrightarrow{e_2}+\overrightarrow{R}[a_2,c_1]+\overrightarrow{S}[c_1,y_1]+\overrightarrow{e_1}+\overrightarrow{P}[x_1,q]$  
		\end{center}
		is a $p,q$-path omitting some vertex in  $\overrightarrow{P}(b_2,a_2)$,
		\item if $p\in V(\overrightarrow{P}(b_1,a_2])$, then 
		\begin{center}
			$\overrightarrow{P}[p,a_2]+\overrightarrow{R}[a_2,c_1]+\overrightarrow{S}[c_1,y_1]+\overrightarrow{e_1}+\overrightarrow{P}[x_1,q]$  
		\end{center}
		is a $p,q$-walk omitting $x_2$.
	\end{itemize}

Hence, $G$ has an orientation of diameter at most $n-2$.
\end{proof}

Theorem~\ref{thm:n-2} follows from Lemmas~\ref{lem:Hamiltonian} and~\ref{lem:non-Hamiltonian}.

\section{Concluding Remarks}\label{section-concludingremark} 

Conjecture~\ref{con:m(n,d)} has been proved for the case $d=2$ by Cochran,  Czabarka, Dankelmann and Sz\'{e}kely  and for the case $d=n-2$ by Theorems~\ref{thm:n-2} and~\ref{thm:sharp}. The case $3\leq d\leq n-3$ remains open. Theorem~\ref{thm:sharp} shows that $m(n,d)\geq{n-d\choose 2}+n+2$ for the case $5\leq d\leq n-2$. We cannot prove the lower bound for the case $3\leq d\leq4$.

\begin{prob}
	For $n\geq5$ and $3\leq d\leq4$, construct a graph of order $n$ and size ${n-d\choose 2}+n+1$ without an orientation of diameter at most $d$.
\end{prob} 

To prove Conjecture~\ref{con:m(n,d)}, it remains to show that  $m(n,d)\leq{n-d\choose 2}+n+2$. 

\begin{prob}
	For $3\leq d\leq n-3$, prove that every bridgeless graph of order $n$ and size at least ${n-d\choose 2}+n+2$  has an orientation of diameter at most $d$.
\end{prob} 
	
\section*{Acknowledgment} The study was supported by Thammasat Univerity Research Fund, Contact No TUFT 54/2566.

	\bibliographystyle{siam} 
	\bibliography{EDO}

\begin{thebibliography}{10}

\bibitem{Bau2015Diameter}
{\sc S.~Bau and P.~Dankelmann}, {\em Diameter of orientations of graphs with
  given minimum degree}, European J. Combin., 49 (2015), pp.~126--133.

\bibitem{Chen2021Diameter}
{\sc B.~Chen and A.~Chang}, {\em Diameter three orientability of bipartite
  graphs}, Electron. J. Combin., 28 (2021), pp.~Paper No. 2.25, 8.

\bibitem{Chen2023Oriented}
\leavevmode\vrule height 2pt depth -1.6pt width 23pt, {\em Oriented diameter of
  graphs with given girth and maximum degree}, Discrete Math., 346 (2023),
  pp.~Paper No. 113287, 10.

\bibitem{Cochran2024Large}
{\sc G.~Cochran}, {\em Large girth and small oriented diameter graphs},
  Discrete Math., 347 (2024), pp.~Paper No. 113846, 7.

\bibitem{Cochran2021Size}
{\sc G.~Cochran, E.~Czabarka, P.~Dankelmann, and L.~Sz\'{e}kely}, {\em A size
  condition for diameter two orientable graphs}, Graphs Combin., 37 (2021),
  pp.~527--544.

\bibitem{Czabarka2019Degree}
{\sc E.~Czabarka, P.~Dankelmann, and L.~A. Sz\'{e}kely}, {\em A degree
  condition for diameter two orientability of graphs}, Discrete Math., 342
  (2019), pp.~1063--1065.

\bibitem{Dankelmann2018Oriented}
{\sc P.~Dankelmann, Y.~Guo, and M.~Surmacs}, {\em Oriented diameter of graphs
  with given maximum degree}, J. Graph Theory, 88 (2018), pp.~5--17.

\bibitem{Dankelmann2025Diameter}
{\sc P.~Dankelmann, M.~Morgan, and E.~Rivett-Carnac}, {\em Diameter of
  orientations of graphs with given order and number of blocks}, Quaestiones
  Mathematicae,  (2025), pp.~1--15.

\bibitem{Egawa2007Orientation}
{\sc Y.~Egawa and T.~Iida}, {\em Orientation of 2-edge-connected graphs with
  diameter 3}, Far East J. Appl. Math., 26 (2007), pp.~257--280.

\bibitem{Koh2002Optimal}
{\sc K.~M. Koh and E.~G. Tay}, {\em Optimal orientations of graphs and
  digraphs: a survey}, vol.~18, 2002, pp.~745--756.
\newblock Graph theory and discrete geometry (Manila, 2001).

\bibitem{kwok2010oriented}
{\sc P.~K. Kwok, Q.~Liu, and D.~B. West}, {\em Oriented diameter of graphs with
  diameter 3}, J. Combin. Theory Ser. B, 100 (2010), pp.~265--274.

\bibitem{Robbins1939Questions}
{\sc H.~E. Robbins}, {\em Questions, {D}iscussions, and {N}otes: {A} {T}heorem
  on {G}raphs, with an {A}pplication to a {P}roblem of {T}raffic {C}ontrol},
  Amer. Math. Monthly, 46 (1939), pp.~281--283.

\bibitem{Surmacs2017Improved}
{\sc M.~Surmacs}, {\em Improved bound on the oriented diameter of graphs with
  given minimum degree}, European J. Combin., 59 (2017), pp.~187--191.

\end{thebibliography}
	
\end{document}